\newtheorem{thm}{Theorem}[section] 
\newtheorem{defi}[thm]{Definition}
\newtheorem{lem}[thm]{Lemma}
\begin{document}

\title{\LARGE The exact order of the number of lattice points visible from the origin}
\author{\Large Wataru Takeda}
\date{\normalsize Department of Mathematics,\\ Kyoto University, \\Kitashirakawa Oiwake-cho, Sakyo-ku, Kyoto 606-8502,
Japan}
\maketitle
\begin{abstract}
We say a lattice point $X=(x_1,\ldots,x_m)$ is visible from the origin, if $\gcd(x_1,...,x_m)=1$. In other word, there are no other lattice point on the line segment from the origin $O$ to $X$. From J.E. Nymann's result \cite{Ny72}, we know that the number of lattice point from the origin in $[-r,r]^m$ is $(2r)^m/\zeta(m)+$(Error term). We showed that the exact order of the error term is $r^{m-1}$ for $m\ge3$. 
\end{abstract}

\section{Introduction}
The counting and probability problem of the visible lattice points is well known.

Let $\mathbb{V}^m=\{x=(x_1,\ldots,x_m)\in\mathbb{Z}^m~|~\text{$x$ is visible from the origin}\}$.
It is well known that the cardinality of the set $\mathbb{V}^m\cap\{(x_1,...,x_m)\in\mathbb{Z}^m~|~|x_i|\le r\ (1\le i\le m)\}$ is

\[\frac{2^m}{\zeta(m)}r^m+\left\{
\begin{array}{ll} 
O(r\log r)&(m=2)\\
O(r^{m-1})& (m\ge3),
\end{array}
\right.
\]
where $\zeta$ is the Riemann zeta function.
F. Mertens proved the the case of $m=2$ in 1874 \cite{Me74} and J. E. Nymann showed the case of $m\ge3$ in 1972 \cite{Ny72}.

In this article,
as a generalization of the result of Nymann, we study the number of elements of \[\mathbb{V}^m\cap\{(x_1,...,x_m)\in\mathbb{Z}^m~|~|x_i|\le r\ (1\le i\le m)\}.\]
Let $V_m(r)$ denote $|\mathbb{V}^m\cap\{(x_1,...,x_m)\in\mathbb{Z}^m~|~|x_i|\le r\ (1\le i\le m)\}|$ and let $E_m(x)$ denote the error term, i.e. \[E_m(r)=V_m(r)-\frac{2^m}{\zeta(m)}r^m.\] Then we obtain a generating function of $V_m(r)$ and the exact order of $E_m(r)$, where $m\ge3$.
More precisely, we prove 
\begin{thm}
If $m\ge3$, \[E_m(r)=\Omega(r^{m-1}).\]
\end{thm}
Combine Nymann's result \cite{Ny72} with this theorem, the exact order of the magnitude of $E_m(r)$ is $r^{m-1}$ for all $m\ge3$.
\section{Jordan totient function $\varphi(n)$} 

In \cite{Ap76}, it is proved that \[V_2(r)=8\sum_{n\le r}\varphi(n).\]

We follow this way and use the Jordan totient function $J_m(n)$ to obtain the value of $V_m(r)$.
\begin{defi} For $m\ge 1$ we define
\begin{align*}
J_m(n):=&|\{(x_1,...,x_m)\in\mathbb{Z}^m~|~\gcd(x_1,...,x_m,n)=1,1\le x_i\le n\ (1\le i\le m)\}|.\\
\intertext{For $m=0$ we define}
J_0(n):=&\left\{
\begin{array}{rl}
1&(n=1)\\
0&(n\neq1)
\end{array}
\right.
\end{align*}
\end{defi}

Since $J_1(n)=|\{x_1\in\mathbb{Z}~|~\gcd(x_1,n)=1,1\le x_1\le n\}|=\varphi (n)$, we regard $J_m(n)$ as a generalization of $\varphi(n)$.

The Euler totient function $\varphi(n)$ satisfies \[\varphi(n)=\sum_{d|n} \mu(d)\frac nd=n\prod_{\substack{p|n\\p:\text{prime}}}\left(1-\frac1p\right),\] where $\mu(d)$ is M\"{o}bius function. As well as this, $J_m(n)$ satisfies following lemma.

\begin{lem}
\label{thm:2.2}
$\displaystyle{J_m(n)=\sum_{d|n} \mu(d)\left(\frac nd\right)^m=n^m\prod_{\substack{p|n\\p:\text{prime}}}\left(1-\frac1{p^m}\right).}$
\end{lem}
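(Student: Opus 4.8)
The plan is to derive both equalities from a single convolution identity obtained by classifying the $n^m$ tuples counted by the trivial bound according to the gcd they share with $n$. Among all tuples $(x_1,\ldots,x_m)$ with $1\le x_i\le n$, I would group them by the value $d=\gcd(x_1,\ldots,x_m,n)$, which necessarily divides $n$. Writing $x_i=dy_i$, the constraint $1\le x_i\le n$ becomes $1\le y_i\le n/d$ and the condition $\gcd(x_1,\ldots,x_m,n)=d$ becomes $\gcd(y_1,\ldots,y_m,n/d)=1$; hence the number of tuples in the class indexed by $d$ is exactly $J_m(n/d)$. Summing over all divisors gives
\[
n^m=\sum_{d\mid n}J_m(n/d)=\sum_{d\mid n}J_m(d),
\]
which in the language of Dirichlet convolution reads $\mathrm{Id}^m=J_m*\mathbf 1$, where $\mathrm{Id}^m(n)=n^m$ and $\mathbf 1(n)=1$.

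The first equality then follows immediately from M\"obius inversion: since $\mathbf 1$ has Dirichlet inverse $\mu$, I obtain
\[
J_m(n)=\sum_{d\mid n}\mu(d)\left(\frac nd\right)^m.
\]
Equivalently, one can reach this directly by inclusion--exclusion over the prime divisors of $n$, subtracting for each prime $p\mid n$ the $(n/p)^m$ tuples all of whose entries are divisible by $p$; the M\"obius coefficients appear automatically and the terms indexed by non-squarefree $d$ vanish because $\mu(d)=0$ there.

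For the product formula I would exploit multiplicativity. Since $\mu$ and $\mathrm{Id}^m$ are both multiplicative, their convolution $J_m$ is multiplicative, so it suffices to evaluate $J_m$ on prime powers. For $n=p^k$ the divisor sum collapses to its two squarefree terms,
\[
J_m(p^k)=(p^k)^m-(p^{k-1})^m=p^{km}\left(1-\frac1{p^m}\right),
\]
and taking the product over the prime-power factors of a general $n=\prod_p p^{k_p}$ yields $J_m(n)=n^m\prod_{p\mid n}(1-p^{-m})$.

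I do not anticipate a serious obstacle, as the argument mirrors the classical $m=1$ case for $\varphi$. The one step that genuinely requires care is the bijection in the first paragraph: one must verify that the substitution $x_i=dy_i$ is a genuine bijection onto the tuples counted by $J_m(n/d)$, in particular that the gcd condition transforms correctly, so that the partition of the $n^m$ tuples is exact and no tuple is counted twice.
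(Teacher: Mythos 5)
Your proposal is correct and takes essentially the same route as the paper: the identical partition of the $n^m$ tuples by the value $d=\gcd(x_1,\ldots,x_m,n)$, giving $n^m=\sum_{d\mid n}J_m(n/d)$, followed by M\"obius inversion. The only divergence is in the product formula, where the paper directly expands $\prod_{p\mid n}\left(1-\frac{1}{p^m}\right)$ into $\sum_{d\mid n}\mu(d)d^{-m}$ (the squarefree divisors of $n$), while you invoke multiplicativity of $J_m=\mu*\mathrm{Id}^m$ and evaluate at prime powers --- both are standard one-line arguments, yours generalizing more readily to other convolutions, the paper's being slightly more self-contained.
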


\begin{proof}
It suffices to show that $\displaystyle{n^m=\sum_{d|n}J_m(d)}$, because the M\"{o}bius inversion formula gives the assertion.
Let $S$ denote $\{(x_1,...,x_m)\in \mathbb{Z}^m~|~1\le x_i\le n\ (1\le i\le m)\}$.
Let $S(d)$ be $\{(x_1,...,x_m)\in \mathbb{Z}^m~|~\gcd(x_1,...,x_m,n)=d,1\le x_i\le n\ (1\le i\le m)\}$, where $d$ divides $n$.
Then $S$ is the disjoint union $\displaystyle{S=\bigcup_{d|n}S(d)}$.

$\gcd(x_1,...,x_m,n)=d$ if and only if $\displaystyle{\gcd\left(\frac {x_1}d,...,\frac {x_m}d,\frac nd\right)=1}$, so $\displaystyle{|S(d)|=J_m\left(\frac nd\right)}$.
Hence we can write \[n^m=|S|=\sum_{d|n}|S(d)|=\sum_{d|n}J_m\left(\frac nd\right)=\sum_{d|n}J_m(d).\]
The equality on the left in Lemma follows.

Let us prove the other equality. If $n=1$ the product is empty and assigned to be the value $1$.

And if $n\ge 2$, we can observe that \[\sum_{d|n} \frac {\mu(d)}{d^m}=\prod_{\substack{p|n\\p:\text{prime}}}\left(1-\frac1{p^m}\right).\]
Thus the other equality in Lemma also follows.
\end{proof}

\section{Generating function of $V_m(r)$} 

\begin{thm}
\label{thm:main}
Generating function of $V_m(r)$ is the following.
\begin{align*}
\sum_{m=0}^{\infty}\frac{u^m}{m!}V_m(r)&=\frac1{2u}(e^{(2X+1)u}-e^{(2X-1)u})\\
\intertext{and}
\sum_{m=0}^{\infty}u^{m+1}V_m(r)&=\frac12\log \frac{1-(2X-1)u}{1-(2X+1)u}, \\
\intertext{where $\displaystyle{i\sum_{n \le r}J_{i-1}(n)}$ is replaced by $X^i$ when $i\ge1$, and $X^0$ are assigned to be the value $0$.}
\end{align*}
\end{thm}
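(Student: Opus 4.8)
The plan is to observe first that the two displayed identities are merely the exponential and the ordinary repackaging of one underlying umbral identity. Since $e^{(2X+1)u}-e^{(2X-1)u}=e^{2Xu}(e^{u}-e^{-u})=2e^{2Xu}\sinh u$, the first claim is equivalent to
\[
\sum_{m\ge 0}\frac{u^m}{m!}V_m(r)=\frac{\sinh u}{u}\,e^{2Xu},
\]
and expanding and comparing coefficients of $u^{m}$ shows that \emph{both} displayed formulas are encodings of the single identity $V_m(r)=\frac{1}{2(m+1)}\bigl[(2X+1)^{m+1}-(2X-1)^{m+1}\bigr]$ under the stated substitution. Because the substitution $X^i\mapsto i\sum_{n\le r}J_{i-1}(n)$ (and $X^0\mapsto 0$) acts only on the coefficient of each monomial $X^i$, I may treat $X$ as a genuine indeterminate throughout, carry out every power-series manipulation in $u$ formally, and apply the substitution only at the very end; the logarithmic form then follows from the exponential form by the same termwise algebra ($-\log(1-au)=\sum_{j\ge1}(au)^j/j$). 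So the whole theorem reduces to proving the one EGF identity above.

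Next I would count $V_m(r)$ directly. Classify each visible point by its set of nonzero coordinates: a visible point with exactly $k$ nonzero entries is specified by the $\binom{m}{k}$ choices of which coordinates are nonzero, the $2^k$ sign patterns, and a tuple $(a_1,\dots,a_k)\in[1,r]^k$ of absolute values with $\gcd(a_1,\dots,a_k)=1$ (the zero coordinates do not affect the gcd). Writing $W_k(r)$ for the number of such coprime tuples, this yields $V_m(r)=\sum_{k=1}^{m}\binom{m}{k}2^k W_k(r)$. Forming the exponential generating function and interchanging the order of summation, the binomial factor sums to $e^{u}$, giving
\[
\sum_{m\ge 0}\frac{u^m}{m!}V_m(r)=e^{u}\sum_{k\ge 1}\frac{(2u)^k}{k!}W_k(r).
\]
Inclusion–exclusion on the common divisor gives $W_k(r)=\sum_{d\le r}\mu(d)\lfloor r/d\rfloor^{k}$, so the inner series collapses to $\sum_{d\le r}\mu(d)\bigl(e^{2u\lfloor r/d\rfloor}-1\bigr)$, and the count produces $e^{u}\sum_{d\le r}\mu(d)\bigl(e^{2u\lfloor r/d\rfloor}-1\bigr)$.

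It then remains to transform the target side into this same expression. Applying the substitution to $e^{2Xu}$ and using Lemma~\ref{thm:2.2} in the form $J_k(n)=\sum_{d\mid n}\mu(d)(n/d)^k$, hence $\sum_{k\ge 0}\frac{t^k}{k!}J_k(n)=\sum_{d\mid n}\mu(d)e^{tn/d}$, I obtain
\[
\frac{\sinh u}{u}\,e^{2Xu}\Big|_{\mathrm{subst}}=(e^{u}-e^{-u})\sum_{n\le r}\sum_{d\mid n}\mu(d)\,e^{2un/d}.
\]
Writing $n=de$ turns the inner double sum into $\sum_{d\le r}\mu(d)\sum_{e=1}^{\lfloor r/d\rfloor}e^{2ue}$, a finite geometric series equal to $\frac{e^{2u}(e^{2u\lfloor r/d\rfloor}-1)}{e^{2u}-1}$; and since $e^{2u}-1=e^{u}(e^{u}-e^{-u})$, the prefactor $(e^{u}-e^{-u})\frac{e^{2u}}{e^{2u}-1}$ collapses to exactly $e^{u}$. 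The target side thus equals $e^{u}\sum_{d\le r}\mu(d)\bigl(e^{2u\lfloor r/d\rfloor}-1\bigr)$, matching the count, which finishes the EGF identity and hence the theorem.

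I expect the main obstacle to be precisely this reconciliation, not the counting. The direct enumeration naturally produces the box-constrained coprime counts $W_k(r)$, whereas the theorem is phrased through the divisor/power-sum quantity $\sum_{n\le r}J_k(n)$; these are \emph{not} equal term by term, so no coefficient-wise comparison succeeds, and the bridge must be built at the level of generating functions, with the geometric-series collapse of the $\sinh$-prefactor against the $J$-side exponential sum as the crux. Two bookkeeping points deserve care: the convention $X^0\mapsto 0$ is exactly what removes the otherwise-constant term and lets the $e^{2Xu}$ series reindex cleanly starting from $\sum_{n\le r}J_0(n)=1$; and the weak inequality $y_i\le n$ in the definition of $J_k$ must be tracked faithfully, since an off-by-one there would shift the geometric sum and break the final cancellation.
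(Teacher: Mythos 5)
Your proof is correct, but its core is genuinely different from the paper's. You share the two outer moves with the paper: reducing both displayed series to the single umbral identity $V_m(r)=\frac{1}{2(m+1)}\{(2X+1)^{m+1}-(2X-1)^{m+1}\}$, and decomposing $V_m(r)=\sum_{k\ge1}\binom{m}{k}2^k W_k(r)$ according to the set of nonzero coordinates (your $W_k(r)$ is exactly the paper's $V_k^+(r)$). From there the paper stays combinatorial and works coefficient by coefficient: it computes the shell increment $A_m^+(n)=V_m^+(n)-V_m^+(n-1)$ by inclusion--exclusion over which coordinates equal $n$, obtaining $A_m^+(n)=\sum_{j=0}^{m-1}(-1)^{m-1-j}\binom{m}{j}J_j(n)$, and then pushes through umbral binomial algebra to reach the closed form. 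You never form the shell counts at all: you invoke M\"obius inversion to write $W_k(r)=\sum_{d\le r}\mu(d)\lfloor r/d\rfloor^k$, package everything into one exponential generating function, and show both sides collapse to $e^{u}\sum_{d\le r}\mu(d)\bigl(e^{2u\lfloor r/d\rfloor}-1\bigr)$, the crux on the $J$-side being $\sum_{k\ge0}\frac{t^k}{k!}J_k(n)=\sum_{d\mid n}\mu(d)e^{tn/d}$ together with the telescoping identity $(e^u-e^{-u})\sum_{e=1}^{N}e^{2ue}=e^{u}(e^{2uN}-1)$; I checked these steps and they are sound, including the $X^0\mapsto0$ bookkeeping at $m=0$. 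What your route buys: all $m$ are handled simultaneously, and the paper's delicate binomial manipulations (whose printed version actually contains sign slips in the final displays) are replaced by one clean geometric-series collapse. What the paper's route buys: it stays at the level of lattice-point counting and yields the intermediate formula for $A_m^+(n)$ --- the number of visible points on the outer shell of the cube $[1,n]^m$ in terms of Jordan totients --- which has independent interest. One point to make explicit in a final write-up: your license to ``treat $X$ as a genuine indeterminate throughout'' is justified only because every operation you perform is linear in the $X$-coefficients (reindexing, and multiplication by $X$-free series in $u$); the umbral substitution is not multiplicative, so this linearity should be stated where you move $\frac{\sinh u}{u}$ across the substitution.
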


\begin{proof}

It suffices to show that \[V_m(r)=\frac1{2(m+1)}\{(2X+1)^{m+1}-(2X-1)^{m+1}\}.\]
Let $V_m^+(r)$ denote $|\mathbb{V}^m\cap\{(x_1,...,x_m)~|~0< x_i\le r\ (1\le i\le m)\}|$ for $m\ge1$.
Considering the sign of component, we have 
\[V_m(r)=\sum_{i=0}^{m-1}\binom mi 2^{m-i}V_{m-i}^+(r).\]

Let $A_m^+(n)=V_m^+(n)-V_m^+(n-1)$, then\[V_m^+(r)=\sum_{2 \le n \le r}A_m^+(n)+1.\]
We compute $A_m^+(n)$ in a combinatorial way as follows.

Fix $i$ with $0\le i<m$. Fix $I\subset\{1,\ldots,m\}$ such that $|I|=m-i$, and let
\[
 V=\{(x_1,\ldots,x_m)\in\mathbb{V}^m~|~x_j=n\text{ for all }j\in I, 0<x_j<n\text{ for all }j\not\in I\}.
\]
Then $|V|=J_{i}(n)$.
There are $\displaystyle{\binom m{m-i}}$ ways to choose $I\subset\{1,\ldots,m\}$ with $|I|=m-i$.
So the number of points $(x_1,\ldots,x_m)\in \mathbb{V}$ such that $|\{i~|~x_i=n \}|\ge m-i$ is \[\binom m{m-i} J_{i}(n).\]
But we count same point $(x_1,\ldots,x_m)\in \mathbb{V}$ such that $|\{i~|~x_i=n \}|=k$, $\displaystyle{\binom k{m-i}}$ times each $i\ (m-k\le i\le m-1)$.

We can show that $\displaystyle{(-1)^{m-1}\sum_{i=m-k}^{m-1}(-1)^{i}\binom k{m-i}=1}$, so we can count all points in $\mathbb{V}$ without repetition and obtain that $\displaystyle{A_m^+(n)=\sum_{j=0}^{m-1}(-1)^{m-1-j}\binom {m}jJ_j(n)}.$

\begin{align*}
\intertext{Thus we get}
V_m^+(r)&=\sum_{2\le n \le r}\sum_{j=0}^{m-i-1}(-1)^{m-i-1-j}\binom {m-i}jJ_j(n)+1,\\
\intertext{and}
V_m(r)&=\sum_{i=0}^{m-1}\binom mi 2^{m-i}\left(\sum_{2\le n \le r}\sum_{j=0}^{m-i-1}(-1)^{m-i-1-j}\binom {m-i}jJ_j(n)+1\right). \\
\intertext{For $j\ge 1$, $X^j$ is defined by $\displaystyle{j\sum_{n \le r}J_{j-1}(n)}$. To use this notation, we add the term $n=1$ to above sum. By the definition of $J_j(n)$, we get $J_j(1)=1$ for all $j$. And we  get $\displaystyle{\sum_{j=0}^{m-i-1}(-1)^{m-i-1-j}\binom {m-i}j=1}$ by applying the binomial theorem. Using this we have}
V_m(r)&=\sum_{i=0}^{m-1}\binom mi 2^{m-i}\left(\sum_{n \le r}\sum_{j=0}^{m-i-1}(-1)^{m-i-1-j}\binom {m-i}jJ_j(n)\right).\\
&=\sum_{i=0}^{m-1}\binom mi 2^{m-i}\left(\sum_{j=0}^{m-i-1}(-1)^{m-i-1-j}\binom {m-i}j\sum_{n \le r}J_j(n)\right).\\
\intertext{By the definition of $X^j$, we find $\displaystyle{\binom {m-i}j \sum_{n \le r}J_j(n)=\frac1{m-i+1}\binom {m-i+1}{j+1}\frac{X^{j+1}}{j+1}}$, this gives}
&=\sum_{i=0}^{m-1}\binom mi 2^{m-i}\frac 1{m-i+1}\sum_{j=0}^{m-i-1}(-1)^{m-i-1-j}\binom {m-i+1}{j+1}X^{j+1}.\\
\intertext{Replacing the index $j+1$ by $j$, we obtain}
&=\sum_{i=0}^{m-1}\binom mi 2^{m-i}\frac 1{m-i+1}\sum_{j=1}^{m-i}(-1)^{m-i-j}\binom {m-i+1}j X^j.
\end{align*}
Because the term $i=m$ and $m+1$ and $j=0$ of above sum are a constant times $X^0$, we get
\begin{align*}
V_m(r)&=\sum_{i=0}^{m+1}\binom mi 2^{m-i}\frac 1{m-i+1}\left(X^{m-i+1}-\sum_{j=0}^{m-i+1}(-1)^{m-i+1-j}\binom {m-i+1}j X^j\right).\\
\intertext{Applying the binomial theorem by repetition, we get}
&=\sum_{i=0}^{m+1}\binom mi 2^{m-i}\frac 1{m-i+1}\left\{X^{m-i+1}-(X-1)^{m-i+1}\right\},\\
&=\frac 1{2(m+1)}\sum_{i=0}^{m+1}\binom {m+1}i\left\{(2X)^{m+1}+(2X-2)^{m+1}\right\},\\
&=\frac 1{2(m+1)}\left\{(2X+1)^{m+1}+(2X-1)^{m+1}\right\}.
\end{align*}
This proves the theorem.
\end{proof}
\section{The value of $\displaystyle{\sum_{n \le r}J_{i-1}(n)}$} 
In this paper, we use the $\Omega$ simbol introduced by G.H. Hardy and J.E. Littlewood. This simbol is defined as follows:
\[f(x)=\Omega(g(x))\Longleftrightarrow\limsup_{x\rightarrow\infty}\left|\frac{f(x)}{g(x)}\right|>0\]
If there exists a function $g(x)$ such that $f(x)=O(g(x))$ and $f(x)=\Omega(g(x))$ then the exact order of $f(x)$ is $g(x)$. 
\begin{lem}
\label{thm:4.1}
Let $\{x\}$ be the fractional part of $x$.
If $m \ge 2$, \[\sum_{d\le r}\mu(d)\left(\frac rd\right)^m\left\{\frac rd\right\}=\Omega(r^m).\]
\end{lem}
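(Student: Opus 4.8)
The plan is to avoid exhibiting explicit values of $r$ and instead to work with the \emph{mean value} of the normalized sum. A direct construction such as $r=N+\tfrac12$ isolates the $d=1$ term $r^m\{r\}=\tfrac12 r^m$, but the tail $\sum_{d\ge2}(r/d)^m$ is comparable to $(\zeta(m)-1)r^m$, which for $m=2$ is too large to be absorbed; so a pointwise construction fails at the bottom of the range. Writing $S(r)=\sum_{d\le r}\mu(d)(r/d)^m\{r/d\}$, I note that $S(r)/r^m=\sum_{d\le r}\frac{\mu(d)}{d^m}\{r/d\}$, and the goal becomes to show $\frac1R\int_1^R \frac{S(r)}{r^m}\,dr\to\frac1{2\zeta(m)}$ as $R\to\infty$. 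Since this average tends to a positive constant, one gets $\limsup_{r\to\infty}S(r)/r^m\ge\frac1{2\zeta(m)}>0$, and in particular $\limsup_{r\to\infty}|S(r)/r^m|>0$, which is exactly $S(r)=\Omega(r^m)$.

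First I would interchange the (finite) sum and the integral: for $r\in[1,R]$ the condition $d\le r$ restricts $r$ to $[d,R]$, so $\frac1R\int_1^R\frac{S(r)}{r^m}\,dr=\frac1R\sum_{d\le R}\frac{\mu(d)}{d^m}\int_d^R\{r/d\}\,dr$, with no loss since for $r<d$ the $d$-th term is simply absent. The inner integral is evaluated by the substitution $u=r/d$ together with the elementary fact $\int_0^X\{u\}\,du=\frac X2+O(1)$, giving $\int_d^R\{r/d\}\,dr=d\int_1^{R/d}\{u\}\,du=\frac R2+O(d)$. Substituting back yields $\frac12\sum_{d\le R}\frac{\mu(d)}{d^m}+O\!\left(\frac1R\sum_{d\le R}d^{1-m}\right)$.

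It remains to let $R\to\infty$. The main term converges, $\sum_{d\le R}\mu(d)/d^m\to1/\zeta(m)$, because the tail $\sum_{d>R}\mu(d)/d^m=O(R^{1-m})$ for $m\ge2$; the error term is $O\!\big(R^{-1}\log R\big)$ when $m=2$ and $O(R^{-1})$ when $m\ge3$, so it vanishes in the limit. This borderline $m=2$ case, where $\sum_{d\le R}d^{-1}$ grows like $\log R$ rather than staying bounded, is the one place that demands care, and it is the main obstacle in the argument. Hence the average tends to $\frac1{2\zeta(m)}$. The final step is the routine observation that a mean value converging to $L$ forces $\limsup\ge L$: if instead $\limsup S(r)/r^m=s<\frac1{2\zeta(m)}$, then for $r\ge r_0$ the integrand is below $s+\varepsilon$, and splitting $\int_1^R=\int_1^{r_0}+\int_{r_0}^R$ bounds the average by $s+\varepsilon$ in the limit, contradicting the computed value. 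This establishes $S(r)=\Omega(r^m)$.
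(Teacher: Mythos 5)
Your proof is correct, but it takes a genuinely different route from the paper. The paper argues pointwise and one-sidedly: it exhibits explicit sequences of $r$ along which $\sum_{d\le r}\frac{\mu(d)}{d^m}\{r/d\}$ is negative and bounded away from zero --- for $m\ge4$ it takes $r$ odd, so the $d=2$ term contributes $-\frac{1}{2^{m+1}}$ and dominates the crude bound $\zeta(m)-1-2^{-m}$ on everything else, while for $m=2,3$ (where that crude bound is too weak, the same obstruction you noted for $r=N+\frac12$) it takes $r$ to be an odd multiple of every odd prime below $100$, pinning down the fractional parts $\{r/d\}$ for all $d\le 100$ and doing a small numerical computation to get the sum below $-\frac1{20}$. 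You instead average: interchanging the finite sum with $\int_1^R$, using $\int_d^R\{r/d\}\,dr=\frac R2+O(d)$, and handling the borderline error $O(R^{-1}\log R)$ at $m=2$, you show $\frac1R\int_1^R S(r)r^{-m}\,dr\to\frac1{2\zeta(m)}>0$, whence $\limsup_{r\to\infty}S(r)/r^m\ge\frac1{2\zeta(m)}$; since the paper's $\Omega$ is the two-sided Hardy--Littlewood $\limsup|f/g|>0$, this suffices. Your approach buys uniformity in $m\ge2$ (no case split, no numerics) and the clean constant $\frac1{2\zeta(m)}$, at the cost of being nonconstructive (no explicit $r$'s) and producing the opposite sign: the paper gets $\Omega_-$, you get $\Omega_+$, and either one implies the stated $\Omega$. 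One small point to make explicit in your final step: to discard $\frac1R\int_1^{r_0}$ you should note that $|S(r)/r^m|\le\sum_{d\le r}d^{-m}\le\zeta(m)$ is bounded, which is immediate but needed for the ``mean value $\to L$ forces $\limsup\ge L$'' argument.
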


\begin{proof}
It suffices to show that $\displaystyle{\sum_{d\le r}\frac {\mu(d)}{d^m}\left\{\frac rd\right\} \le M<0}$ for infinity many values of $r$ and some negative $M$.
\begin{align*}
\intertext{If $m \ge 4$ and $r$ is odd integer and greater than or equal to $3$,}
\sum_{d\le r}\frac {\mu(d)}{d^m}\left\{\frac rd\right\}&=-\frac1{2^{m+1}}+\sum_{3\le d\le r}\frac {\mu(d)}{d^m}\left\{\frac rd\right\}.\\
\intertext{Since $\mu(d)=1.0.-1$ and $\displaystyle{\left\{\frac rd\right\}\le 1}$,}
&\le -\frac1{2^{m+1}}+\sum_{3\le d\le r}\frac 1{d^m},\\
&=-\frac1{2^{m+1}}+\zeta(m)-1-\frac 1{2^m}<0,\\
\intertext{since when $m \ge 4$, $\displaystyle{\zeta(m)-1-\frac1{2^m}<\frac1{2^{m+1}}}$.}
\intertext{So for $m \ge 4$ the lemma follows.}
\intertext{Suppose that $m=2$ or $m=3$ and $\displaystyle{r=k\prod_{p\le 100}p}$, where the product is extended over all odd primes less than $100$ and $k$ isn't a multiple of $2$ and $p$.}
\intertext{Then,}
\sum_{d\le r}\frac {\mu(d)}{d^m}\left\{\frac rd\right\}&\le \sum_{d=1}^{100}\frac {\mu(d)}{d^m}\left\{\frac rd\right\}+\sum_{d=101}^{\infty}\frac {\mu(d)}{d^m}\left\{\frac rd\right\}.\\
\intertext{Since $\mu(d)=1.0.-1$ and $\displaystyle{\left\{\frac rd\right\}\le 1}$,}
&\le -\frac1{2^{m+1}}+\sum_{d=3}^{100}\frac {\mu(d)}{d^m}\left\{\frac rd\right\}+\sum_{d=101}^{\infty}\frac 1{d^m}.
\end{align*}
Because we defined $\displaystyle{r=k\prod_{p\le 100}p}$,  we obtain
\begin{align*}
\sum_{d=3}^{100}\frac {\mu(d)}{d^m}\left\{\frac rd\right\}&=\sum_{p=prime}^{47}\frac 1{(2p)^m}\frac 12-\frac12\left(\frac1{30^m}+\frac1{42^m}+\frac1{66^m}+\frac1{78^m}+\frac1{70^m}\right)\\
&<\frac25\times\frac1{10^{m-1}}
\end{align*}
From this result and $\displaystyle{\sum_{d=101}^{\infty}\frac 1{d^m}\le \frac1{100^{m-1}}}$, we find
\begin{align*}
\sum_{d\le r}\mu(d)\frac 1{d^m}\left\{\frac rd\right\}&\le -\frac1{2^{m+1}}+\frac25\times\frac1{10^{m-1}}+\frac1{100^{m-1}}\\
&<-\frac1{20},
\end{align*}
so for $m=2$ or $m=3$ the lemma follows.
This completes the proof of the lemma.
\end{proof}

\begin{lem}
\label{thm:4.2}
If $i \ge 3$,\ \ $\displaystyle{X^i=\frac1{\zeta(i)}r^i+\Omega(r^{i-1})}$
\end{lem}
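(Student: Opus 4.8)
The plan is to reduce $X^i$ to a Dirichlet-type sum over the M\"obius function and then extract the fluctuating term that is controlled by Lemma \ref{thm:4.1}. Recall that for $i\ge1$ the symbol $X^i$ abbreviates $i\sum_{n\le r}J_{i-1}(n)$, so by Lemma \ref{thm:2.2} and the substitution $n=dk$ I would first write
\[
\sum_{n\le r}J_{i-1}(n)=\sum_{d\le r}\mu(d)\sum_{k\le r/d}k^{i-1}.
\]
The inner power sum is then expanded by the elementary (Faulhaber / Euler--Maclaurin) formula $\sum_{k\le N}k^{i-1}=\frac{N^i}{i}+\frac{N^{i-1}}{2}+O(N^{i-2})$ with $N=\lfloor r/d\rfloor$. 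After substituting $N=r/d-\{r/d\}$ and expanding the binomials, the key local identity becomes
\[
\sum_{k\le r/d}k^{i-1}=\frac1i\Bigl(\frac rd\Bigr)^i-\Bigl(\frac rd\Bigr)^{i-1}\Bigl\{\frac rd\Bigr\}+\frac12\Bigl(\frac rd\Bigr)^{i-1}+O\Bigl(\bigl(\tfrac rd\bigr)^{i-2}\Bigr).
\]

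Second, I would multiply by $\mu(d)$, sum over $d\le r$, and multiply by $i$ to recover $X^i$. The leading term is $r^i\sum_{d\le r}\mu(d)d^{-i}$; completing this tail gives $\frac{r^i}{\zeta(i)}+O(r)$, since $\sum_{d>r}\mu(d)d^{-i}=O(r^{-(i-1)})$. The remaining error terms I would verify are all $o(r^{i-1})$ for $i\ge3$: the quantity $O\!\left(r^{i-2}\sum_{d\le r}d^{-(i-2)}\right)$ is $O(r^{i-2})$ when $i\ge4$ and $O(r\log r)$ when $i=3$, while $O(r)$ is plainly negligible. Collecting everything yields
\[
X^i-\frac{r^i}{\zeta(i)}=r^{i-1}\Bigl(-i\sum_{d\le r}\frac{\mu(d)}{d^{i-1}}\Bigl\{\frac rd\Bigr\}+\frac i2\sum_{d\le r}\frac{\mu(d)}{d^{i-1}}\Bigr)+o(r^{i-1}).
\]

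Finally, I would apply Lemma \ref{thm:4.1} with exponent $m=i-1\ge2$: along the sequence of $r$ produced there, $\sum_{d\le r}\mu(d)d^{-(i-1)}\{r/d\}\le M<0$, so $-i\sum_{d\le r}\mu(d)d^{-(i-1)}\{r/d\}\ge i|M|>0$. The auxiliary sum $\sum_{d\le r}\mu(d)d^{-(i-1)}$ converges to $1/\zeta(i-1)>0$, hence is bounded below by a positive constant for large $r$ and cannot cancel the fluctuating term. Thus the bracketed expression is $\ge i|M|>0$ along this sequence, giving $\limsup_{r\to\infty}\bigl|X^i-r^i/\zeta(i)\bigr|/r^{i-1}\ge i|M|>0$, which is exactly $X^i=\frac1{\zeta(i)}r^i+\Omega(r^{i-1})$.

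The step I expect to be most delicate is the bookkeeping that isolates the single fractional-part sum of exponent $i-1$: one must retain the $N^{i-1}/2$ Euler--Maclaurin correction, which is of the same order $r^{i-1}$, and check that it reinforces rather than cancels the $\Omega$-contribution. Since both surviving coefficients of $r^{i-1}$ are positive along the chosen sequence, no cancellation occurs, and the only external input required is the one-sided negative bound supplied by Lemma \ref{thm:4.1}.
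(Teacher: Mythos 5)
Your proof is correct and follows essentially the same route as the paper: expand $J_{i-1}$ via M\"obius and Lemma \ref{thm:2.2}, swap the order of summation, apply the power-sum (Faulhaber) expansion with $[r/d]=r/d-\{r/d\}$, and invoke Lemma \ref{thm:4.1} with exponent $m=i-1$ to produce the $\Omega(r^{i-1})$ term. Your explicit verification that the correction term $\frac{i}{2}r^{i-1}\sum_{d\le r}\mu(d)d^{-(i-1)}$, which is positive and of the same order $r^{i-1}$, reinforces rather than cancels the negative fractional-part sum along the sequence from Lemma \ref{thm:4.1} is a point the paper leaves implicit, so your bookkeeping is if anything more careful than the original.
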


\begin{proof}
For $i\ge 3$, $X^i$ is defined by $\displaystyle{i\sum_{n \le r}J_{i-1}(n)}$.
\begin{align*}
\intertext{From Lemma \ref{thm:2.2} we know that $\displaystyle{J_{i-1}(n)=\sum_{d|n} \mu(d)\left(\frac nd\right)^{i-1}}$,}
X^i&=i\sum_{n \le r}\sum_{d|n} \mu(d)\left(\frac nd\right)^{i-1}.\\
\intertext{We write $n=dq$ and sum over all pair of positive integers $d,q$ with $dq\le r$, thus}
X^i&=i\sum_{dq \le r} \mu(d)q^{i-1}\\
\intertext{Changing the order of summation,}
X^i&=i\sum_{d \le r}\mu(d)\sum_{q\le \frac rd}q^{i-1}\\
\intertext{Applying the relationship between Bernoulli numbers $B_0,B_1(=\displaystyle{\frac12}),B_2,\ldots$ and a sum $1^k+2^k+\cdots+n^k$, that is, $\displaystyle{\sum_{q=1}^n q^{i-1} =\frac1i\sum_{j=0}^{i-1}\binom ij B_j n^{i-j}}$,}
X^i&=i\sum_{d \le r}\mu(d)\frac 1i \sum_{j=0}^{i-1} \binom ij B_j\left[\frac rd\right]^{i-j},\\
\intertext{where $[x]$ is the greatest integer less than or equal to $x$. Now we use a relation $[x]=x-\{x\}$,}
X^i&=\sum_{d \le r}\mu(d)\sum_{j=0}^{i-1} \binom ij B_j\left(\frac rd-\left\{\frac rd\right\}\right)^{i-j}.
\end{align*}
We note that
\begin{align*} 
\left|\sum_{d\le r}\frac {\mu(d)}{d^m}\left\{\frac rd\right\}^k\right|&\le \sum_{d\le r}\frac 1{d^m}=\left\{
\begin{array}{cl}
\zeta(m)+O(r^{1-m})& (m\ge2)\\
\log(r)+\gamma+o(1) & (m=1),
\end{array}
\right.
\end{align*}
where $\gamma$ is Euler's constant, defined by the equation \[\gamma=\lim_{n\rightarrow\infty}\left(\sum_{k=1}^n\frac1k-\log n\right).\]

\noindent
Combining this result with Lemma \ref{thm:4.1} and $\displaystyle{\sum_{d\le r}\frac{\mu(d)}{d^i}=\frac1{\zeta(i)}+O\left(\frac1{r^{i-1}}\right)}$, where $i>1$, we get
\begin{align*}
X^i&=r^i\sum_{d \le r}\frac {\mu(d)}{d^i}+\Omega(r^{i-1}).\\
&=\frac1{\zeta(i)}r^i+\Omega(r^{i-1})
\intertext{This proved the lemma.}
\end{align*}
\end{proof}

\section{The exact order of magnitude of $E_m(r)$}
By using lemmas in last section with Theorem \ref{thm:main}, we prove following theorem about the exact order of magnitude of $E_m(r)$.
\begin{thm}
If $m\ge3$, \[E_m(r)=\Omega(r^{m-1}).\]
\end{thm}

\begin{proof}

\begin{align*}
\intertext{From Theorem \ref{thm:main}, }
V_m(r)&=\frac1{2(m+1)}\{(2X+1)^{m+1}-(2X-1)^{m+1}\},\\
&=(2X)^{m}+O(X^{m-2}).\\
\intertext{Applying Lemma \ref{thm:4.1} and \ref{thm:4.2}, we find}
V_m(r)&=\frac{2^m}{\zeta(m)}r^m+\Omega(r^{m-1}).
\end{align*}
\end{proof}
Combine Nymann's result \cite{Ny72} with this theorem, the exact order of the magnitude of $E_m(r)$ is $r^{m-1}$ for all $m\ge3$.

\end{document}